\documentclass{amsart}
\usepackage{amsmath, amssymb,epic,graphicx,mathrsfs,enumerate}
\usepackage[all]{xy}
\usepackage{color}
\usepackage{comment}

\usepackage{amsthm}
\usepackage{amssymb}
\usepackage{latexsym}
\usepackage{longtable}
\usepackage{epsfig}
\usepackage{amsmath}
\usepackage{hhline}

% Line spacing -----------------------------------------------------------
%\renewcommand{\baselinestretch}{1.5} %%%%%%%% double spacing
%%%%%%%%%%%%%%%%%%%

% THEOREMS ---------------------------------------------------------------
%%%%%%%%%\theoremstyle{plain}
\newtheorem{thm}{Theorem}%[section]
\newtheorem{cor}[thm]{Corollary}

\newtheorem{question}[]{Question}

\numberwithin{equation}{section}

\renewcommand{\footnote}{\endnote}
\newcommand{\ignore}[1]{}\makeglossary

\begin{document}
	\bibliographystyle{amsplain}
%	\subjclass{ 20D10, 05C25}
%	\keywords{groups generation; waiting time; Sylow subgroups; permutations groups}
	\title[K\"{o}v\'{a}ri-S\'{o}s-Tur\'{a}n theorem]{Applying the K\"{o}v\'{a}ri-S\'{o}s-Tur\'{a}n theorem\\ to a question in group theory}

	\author{Andrea Lucchini}
	\address{Andrea Lucchini\\ Universit\`a degli Studi di Padova\\  Dipartimento di Matematica \lq\lq Tullio Levi-Civita\rq\rq\\ Via Trieste 63, 35121 Padova, Italy\\email: lucchini@math.unipd.it}
%	\thanks{Partially supported by Universit\`a di Padova (Progetto di Ricerca di Ateneo: \lq\lq Invariable generation of groups\rq\rq).}

	\begin{abstract}Let $m\leq n$ be positive integers and $\mathfrak X$  a class of groups which is closed for
		subgroups, quotient groups and extensions. Suppose that a finite group $G$  satisfies the condition that for every two subsets $M$
		and $N$ of cardinalities $m$ and $n,$ respectively, there exist $x \in M$ and $y \in N$ such that $\langle x, y \rangle\in \mathfrak X.$ Then either $G\in \mathfrak X$
		or  $|G|\leq \left(\frac{180}{53}\right)^m(n-1).$	\end{abstract}
	\maketitle

%%%%%%%%%%%%%%% ABOUT DIAMETER %%%%%%%%%%%%%%%%%%%%%%%%%%%%%%%%%
Let $m, n$ be positive integers and $\mathfrak X$ be a class of groups.
We say that a group $G$ satisfies the condition $\mathfrak X(m,n)$ if for every two subsets $M$
and $N$ of cardinalities $m$ and $n,$ respectively, there exist $x \in M$ and $y \in N$ such that $\langle x, y \rangle\in \mathfrak X.$
If $G$ satisfies the condition $\mathfrak X(m,n)$, then we write $G \in \mathfrak X(m,n).$ In \cite{zorro} M. Zarrin proposed the following question.

\begin{question} Let G be a finite group and $G\notin \mathfrak X.$  Does there exist a bound
(depending only on $m$ and $n$) for the size of $G$ if $G$ satisfies the
condition $\mathfrak X(m,n)$?
\end{question}

An affirmative answer is given in \cite{zorro} for the class of nilpotent groups. In an earlier paper R. Bryce gave a positive solution for the class of supersoluble groups, under the additional condition $n=m.$ In this short note we prove that an affirmative question can be given whenever $\mathfrak X$ is a class of finite groups which is closed for
subgroups, quotient groups and extensions. Our argument relies on
the	K\"{o}v\'{a}ri-S\'{o}s-Tur\'{a}n theorem \cite{kst}, stating that, if $m\leq n$ are two positive integers, then a graph with $t$ vertices and at least $((n-1)^{1/m}t^{2-1/m}+(m-1)t)/2$ edges, contains a copy of the complete bipartite graph $K_{m,n}.$ The crucial observation is the following:
\begin{thm}\label{prin}
Let $\mathfrak X$ be a class of groups and suppose that there exists
a real positive number $\gamma$ with the following property: if $X$ is a finite group and the probability that two randomly chosen elements of $X$ generate a group in  $\mathfrak X$ is
greater than $\gamma,$ then $X$ is in  $\mathfrak X$. If $m\leq n,$ then $$|G|\leq \left(\frac{2}{1-\gamma}\right)^m(n-1)$$ for any $G\in  \mathfrak X(m,n)\setminus \mathfrak X.$
\end{thm}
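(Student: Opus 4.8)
The plan is to translate the two hypotheses on $G$ into a pair of incompatible facts about a graph on the vertex set $G$, and then to invoke the K\"{o}v\'{a}ri--S\'{o}s--Tur\'{a}n theorem. We may assume $\gamma<1$ (otherwise the claimed bound is vacuous) and $n\geq 2$: if $n=1$ then $m=1$, and the condition $\mathfrak X(1,1)$ says that $\langle x,y\rangle\in\mathfrak X$ for \emph{all} $x,y\in G$, so the probability that two random elements of $G$ generate a group in $\mathfrak X$ equals $1>\gamma$ and $G\in\mathfrak X$; thus $\mathfrak X(m,n)\setminus\mathfrak X$ is empty in that case. Now fix $G\in\mathfrak X(m,n)\setminus\mathfrak X$ and write $t=|G|$.

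Let $\Gamma$ be the graph with vertex set $G$ in which two distinct elements $x,y$ are joined by an edge exactly when $\langle x,y\rangle\notin\mathfrak X$. First I would bound the number of edges $e(\Gamma)$ of $\Gamma$ from below, using $G\notin\mathfrak X$: by the contrapositive of the hypothesis on $\gamma$, the number of ordered pairs $(x,y)\in G\times G$ with $\langle x,y\rangle\in\mathfrak X$ is at most $\gamma t^{2}$, so the number of ordered pairs with $\langle x,y\rangle\notin\mathfrak X$ is at least $(1-\gamma)t^{2}$; discarding the at most $t$ pairs with $x=y$ and halving gives $e(\Gamma)\geq\bigl((1-\gamma)t^{2}-t\bigr)/2$. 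Next I would bound $e(\Gamma)$ from above, using $G\in\mathfrak X(m,n)$: the existence of a copy of $K_{m,n}$ in $\Gamma$ would produce disjoint subsets $M,N\subseteq G$ with $|M|=m$, $|N|=n$ and $\langle x,y\rangle\notin\mathfrak X$ for all $x\in M$, $y\in N$, contradicting $G\in\mathfrak X(m,n)$; hence $\Gamma$ is $K_{m,n}$-free, and the K\"{o}v\'{a}ri--S\'{o}s--Tur\'{a}n theorem forces $e(\Gamma)<\bigl((n-1)^{1/m}t^{2-1/m}+(m-1)t\bigr)/2$.

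Combining the two estimates gives $(1-\gamma)t^{2}<(n-1)^{1/m}t^{2-1/m}+mt$. To conclude I would argue by contradiction: suppose $t>\bigl(\tfrac{2}{1-\gamma}\bigr)^{m}(n-1)$. Taking $m$-th roots yields $(n-1)^{1/m}<\tfrac{1-\gamma}{2}\,t^{1/m}$, and substituting this into the previous inequality gives $(1-\gamma)t^{2}<\tfrac{1-\gamma}{2}t^{2}+mt$, hence $t<\tfrac{2m}{1-\gamma}$. On the other hand, since $\tfrac{2}{1-\gamma}>2$ we have $\bigl(\tfrac{2}{1-\gamma}\bigr)^{m-1}\geq 2^{m-1}\geq m$, and $n-1\geq 1$, so $\tfrac{2m}{1-\gamma}\leq\bigl(\tfrac{2}{1-\gamma}\bigr)^{m}(n-1)<t$, a contradiction. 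Therefore $|G|=t\leq\bigl(\tfrac{2}{1-\gamma}\bigr)^{m}(n-1)$.

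The only genuine idea here is the one the introduction advertises: recognising that the condition $\mathfrak X(m,n)$ is literally a $K_{m,n}$-avoidance statement for the ``non-$\mathfrak X$-generation'' graph on $G$, while $G\notin\mathfrak X$ forces a positive proportion of its edges; once this is in place, the K\"{o}v\'{a}ri--S\'{o}s--Tur\'{a}n theorem does the work. The one step needing a little care is the arithmetic at the end: one must keep track of the diagonal pairs $x=y$ and of the lower-order term $(m-1)t$ in the K\"{o}v\'{a}ri--S\'{o}s--Tur\'{a}n bound so that the resulting constant is exactly $\tfrac{2}{1-\gamma}$ rather than something larger, and this is dispatched by the elementary inequality $\bigl(\tfrac{2}{1-\gamma}\bigr)^{m-1}\geq m$.
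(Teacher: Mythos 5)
Your proof is correct and follows essentially the same route as the paper: the same graph on $G$ with edges where $\langle x,y\rangle\notin\mathfrak X$, the lower bound on its edge count from the contrapositive of the probability hypothesis, $K_{m,n}$-freeness from the condition $\mathfrak X(m,n)$, and then the K\"{o}v\'{a}ri--S\'{o}s--Tur\'{a}n theorem. The only (harmless) differences are bookkeeping: you treat the diagonal pairs $x=y$ explicitly and absorb the lower-order term $(m-1)t$ via the inequality $\left(\frac{2}{1-\gamma}\right)^{m-1}\geq m$ in a proof by contradiction, whereas the paper reduces to the case $|G|\geq n-1$ and uses $m-1\leq n-1$ to the same effect.
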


\begin{proof}
	Let $G\in \mathfrak X(m,n)\setminus \mathfrak X.$ Consider the graph $\Gamma_{\mathfrak X}(G)$ whose vertices are the elements of $G$ and in which two vertices $x_1$ and $x_2$ are  joined by an edge if and only if $\langle x_1, x_2 \rangle \notin \mathfrak X$ and let $\eta$ the number of edges of $\Gamma_{\mathfrak X}(G).$ Since $G\notin \mathfrak X,$ the probability that two vertices of $\Gamma_{\mathfrak X}(G)$ are joined by an edge is at least $1-\gamma$, so we must have
	\begin{equation}\label{uno}
	\eta \geq \frac{(1-\gamma)|G|^2}{2}.
	\end{equation}
	On the other hand, since $G\in   \mathfrak X(m,n),$  $\Gamma_{\mathfrak X}(G)$ cannot contain the complete bipartite graph $K_{m,n}$ as a subgraph.
 By the
	K\"{o}v\'{a}ri-S\'{o}s-Tur\'{a}n theorem, 
	\begin{equation}\label{due}\eta\leq \frac{(n-1)^{1/m}|G|^{2-1/m}+(m-1)|G|}2.
	\end{equation}
	Combining (\ref{uno}) and \ref{due}, we deduce
	\begin{equation}\label{tre}
	\left( \frac{n-1}{|G|}\right)^{1/m}+\frac{n-1}{|G|}\geq 
		\left( \frac{n-1}{|G|}\right)^{1/m}+\frac{m-1}{|G|}\geq 1-\gamma.
	\end{equation}
We may assume $|G|\geq n-1$. This implies 	$\left(\frac{n-1}{|G|}\right)^{1/m}\geq\frac{n-1}{|G|}$ and therefore it follows from (\ref{tre}) that
\begin{equation}
\left(\frac{n-1}{|G|}\right)^{1/m}\geq \frac{1-\gamma}2.
\end{equation}
This implies
$$|G|\leq \left(\frac{2}{1-\gamma}\right)^m(n-1).\qedhere$$
\end{proof}
Guralnick and Wilson \cite{gw}, using the classification of the finite simple groups, proved the following result. There exists a real number $\kappa$, strictly between 0 and 1, with the
following property: let $\mathfrak X$ be any class of finite groups which is closed for
subgroups, quotient groups and extensions, and let $G$ be a finite group; if the
probability that two randomly chosen elements of $G$ generate a group in $\mathfrak X$ is
greater than $\kappa$, then $G$ is in $\mathfrak X.$ 
Combining \cite[Proposition 5]{gw} with \cite[Theorem 1.1]{nina}, one may deduce that $\kappa$ can be taken to be $\frac{37}{90}=\max\left(1-\frac{53}{90},\frac{5}{18}\right).$
This allows us to deduce our main result.
\begin{cor}\label{ma}
Let $\mathfrak X$ be any class of finite groups which is closed for
subgroups, quotient groups and extensions, and let $G$ be a finite group. If $m\leq n$ are positive integers and $G\in  \mathfrak X(m,n)\setminus \mathfrak X,$ then $|G|\leq \left(\frac{180}{53}\right)^m(n-1)$.
\end{cor}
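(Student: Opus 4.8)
The plan is to derive Corollary \ref{ma} as an immediate consequence of Theorem \ref{prin} by supplying an admissible value of the constant $\gamma$. The hypothesis on $\mathfrak X$ — closure under subgroups, quotient groups and extensions — is exactly what is needed to invoke the Guralnick--Wilson theorem quoted above, which guarantees the existence of some threshold $\kappa \in (0,1)$ such that any finite group in which the probability that two random elements generate an $\mathfrak X$-subgroup exceeds $\kappa$ must itself lie in $\mathfrak X$. Thus $\gamma = \kappa$ satisfies the hypothesis of Theorem \ref{prin} for this class $\mathfrak X$.

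First I would pin down a concrete value of $\kappa$. Here one uses the refinement indicated in the excerpt: combining \cite[Proposition 5]{gw} with \cite[Theorem 1.1]{nina} yields that one may take $\kappa = \tfrac{37}{90} = \max\!\left(1-\tfrac{53}{90},\tfrac{5}{18}\right)$. I would check (or simply cite) that this value indeed has the required property for every class $\mathfrak X$ closed under subgroups, quotients and extensions; this is the only place where the classification of finite simple groups enters, and it is the substantive input being quoted rather than reproved.

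Then I would feed $\gamma = \tfrac{37}{90}$ into the bound of Theorem \ref{prin}. We compute
\[
\frac{2}{1-\gamma} = \frac{2}{1 - \tfrac{37}{90}} = \frac{2}{\tfrac{53}{90}} = \frac{180}{53},
\]
so for any $G \in \mathfrak X(m,n)\setminus \mathfrak X$ with $m \le n$, Theorem \ref{prin} gives $|G| \le \left(\tfrac{180}{53}\right)^{m}(n-1)$, which is precisely the claimed bound.

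The only genuine obstacle is the choice and justification of $\kappa$: Theorem \ref{prin} is purely combinatorial and elementary, but it is vacuous without an input of the Guralnick--Wilson type, and obtaining the sharp-looking constant $\tfrac{37}{90}$ relies on the quantitative statements in \cite{gw} and \cite{nina}. Since the corollary only asserts the existence of the bound with that particular numerical constant, I would treat the value $\kappa = \tfrac{37}{90}$ as established by those references and not attempt to re-derive it; everything else is the short arithmetic substitution above.
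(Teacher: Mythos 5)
Your proposal is correct and matches the paper's argument exactly: the paper also deduces the corollary by taking $\gamma=\kappa=\frac{37}{90}$ (justified by Guralnick--Wilson together with the quantitative input from \cite{gw} and \cite{nina}) and substituting into Theorem \ref{prin} to get $\frac{2}{1-\gamma}=\frac{180}{53}$. Nothing is missing.
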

With the same argument, combining Theorem \ref{prin} with \cite[Theorem A]{gw} (see also the remark in \cite{gw} following the statement of Theorem A), we deduce the following results, the first of which is an improvement of  \cite[Theorem 3.6]{zorro}.
\begin{cor}
Let $m\leq n$ be positive integers and  $G$  a finite group.
\begin{enumerate}
	\item 
 If $\mathfrak X$ is the class of nilpotent groups and
	$G\in  \mathfrak X(m,n)\setminus \mathfrak X,$ then $$|G|\leq 4^m(n-1).$$
	\item  If $\mathfrak X$ is the class of soluble groups and
	$G\in  \mathfrak X(m,n)\setminus \mathfrak X,$ then $$|G|\leq \left(\frac{60}{19}\right)^m(n-1).$$
		\item  If $\mathfrak X$ is the class of finite groups of odd order and
	$G\in  \mathfrak X(m,n)\setminus \mathfrak X,$ then $$|G|\leq \left(\frac{8}{3}\right)^m(n-1).$$
\end{enumerate}
\end{cor}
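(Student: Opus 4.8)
The plan is to invoke Theorem~\ref{prin} once for each of the three classes, taking as $\gamma$ the sharp two-element generation threshold recorded in \cite[Theorem A]{gw}. The feature that makes this immediate is that Theorem~\ref{prin} imposes \emph{no} closure hypothesis on $\mathfrak X$: it needs only one real number $\gamma$ with the property that a generation probability exceeding $\gamma$ forces membership in $\mathfrak X$. Hence, in contrast with Corollary~\ref{ma}, the argument does not pass through the universal Guralnick--Wilson constant, and it is in particular harmless that the class of nilpotent groups fails to be closed under extensions.

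For the class of nilpotent groups, \cite[Theorem A]{gw} states that a finite group more than half of whose ordered pairs of elements generate a nilpotent subgroup is itself nilpotent, so $\gamma=1/2$ is admissible and Theorem~\ref{prin} gives $|G|\le(2/(1-1/2))^m(n-1)=4^m(n-1)$; this is (1), and it sharpens \cite[Theorem 3.6]{zorro}. For the class of soluble groups the analogous sharp threshold is $11/30$, realized by $\alt(5)$, so with $\gamma=11/30$ one obtains $|G|\le(2/(1-11/30))^m(n-1)=(60/19)^m(n-1)$, which is (2). For the class of finite groups of odd order the threshold is $1/4$, realized already by $\sym(3)$ (in which precisely $9$ of the $36$ ordered pairs of elements --- those with both entries of odd order --- generate an odd-order subgroup), so $\gamma=1/4$ yields $|G|\le(2/(1-1/4))^m(n-1)=(8/3)^m(n-1)$, which is (3).

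Thus the corollary reduces to reading off the three constants $1/2$, $11/30$, $1/4$ from \cite{gw} and performing three one-line substitutions into the inequality of Theorem~\ref{prin}; there is no additional group theory to do. The only point that requires attention --- the ``main obstacle'', modest as it is --- is to confirm that \cite[Theorem A]{gw} together with the remark following its statement really supplies these \emph{exact} thresholds rather than weaker upper estimates for them. As a consistency check, the same small groups that appear above ($\sym(3)$ for nilpotency and for odd order, $\alt(5)$ for solubility) show that the values of $\gamma$, and hence the bases $4$, $60/19$ and $8/3$, cannot be lowered within this approach.
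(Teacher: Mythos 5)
Your proposal is correct and is exactly the paper's argument: the paper proves this corollary in one line by feeding the Guralnick--Wilson thresholds $\gamma=\tfrac12$, $\tfrac{11}{30}$, $\tfrac14$ from \cite[Theorem A]{gw} (and the remark following it, which is where the nilpotent and odd-order statements and the sharpness examples $\sym(3)$, $\alt(5)$ actually appear) into Theorem~\ref{prin}, yielding the bases $4$, $\tfrac{60}{19}$, $\tfrac83$. Your observation that Theorem~\ref{prin} needs no closure hypotheses, so the failure of extension-closure for nilpotent groups is harmless, matches the paper's implicit reasoning.
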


\end{document}